\documentclass[12pt]{amsart}
\usepackage[english]{babel}
\parindent=0.pt
\usepackage{amsmath}
\usepackage{amsthm}
\usepackage{amssymb}
\usepackage{mathrsfs}
\usepackage{enumerate}
\usepackage[notcite, final, notref]{showkeys}
\usepackage{dsfont}

\usepackage[dvips]{color}
\topmargin=-10mm \oddsidemargin=0mm \evensidemargin=0mm
\textheight=230mm \textwidth=160mm

\newtheorem{theorem}{Theorem}[section]

\newtheorem{lemma}[theorem]{Lemma}
\newtheorem{proposition}[theorem]{Proposition}

\newtheorem{definition}[theorem]{Definition}

\theoremstyle{definition}
\newtheorem{remark}[theorem]{Remark}

\newcommand{\e}{\epsilon}

\usepackage{xcolor}

\title[]{Adaptative decomposition: the case of the Drury-Arveson space}

\author[D. Alpay]{Daniel Alpay}
\address{(DA) Department of Mathematics\\
Ben-Gurion University of the Negev\\
Beer-Sheva 84105 Israel}
\email{dany@math.bgu.ac.il}

\author[F. Colombo]{Fabrizio Colombo}
\address{(FC) Politecnico di
Milano\\Dipartimento di Matematica\\Via E. Bonardi, 9\\20133 Milano,
Italy}
\email{fabrizio.colombo@polimi.it}

\author[T. Qian]{Tao Qian}
\address{(TQ) Department of Mathematics\\
Universityof Macau\\
Macau}
\email{fsttq@umac.mo}

\author[I. Sabadini]{Irene Sabadini}
\address{(IS) Politecnico di
Milano\\Dipartimento di Matematica\\Via E. Bonardi, 9\\20133 Milano\\Italy}
\email{irene.sabadini@polimi.it}

\thanks{
The authors thank the
Macao Government FDCT 098/2012/A3 and D. Alpay thanks the Earl Katz family for endowing the chair
which supported his research.
}

\begin{document}
\maketitle
\begin{abstract}
The maximum selection principle allows to give expansions, in an adaptive way, of functions in the Hardy space $\mathbf H_2$ of the disk
in terms of Blaschke products. The expansion is specific to the given function. Blaschke factors and products have counterparts in
the unit ball of $\mathbb C^N$, and this fact allows us to extend in the present paper
the maximum selection principle to the case of functions in
the Drury-Arveson space of functions analytic in the unit ball of $\mathbb C^N$. This will give rise to an algorithm  which is a
variation in this higher dimensional case of the greedy algorithm. We also introduce infinite Blaschke products in this setting
and study their convergence.

 \end{abstract}
%\noindent AMS Classification: 47A10, 47A60.

%\noindent {\em Key words}: Spectral theorem for unbounded anti-self adjoint operators, S-spectrum, Unitary operators.
\date{today}
\tableofcontents
\section{Introduction}
\setcounter{equation}{0}
In \cite{QWa1} the authors introduced an algorithm based on the maximum selection principle, to decompose a given function of the Hardy space $\mathbf H_2(\mathbb D)$ of the unit disk
into intrinsic components which correspond to modified Blaschke products
\begin{equation}\label{TMsystem}
B_n(z)= \frac{\sqrt{1-|a_n|^2}}{1-z\overline{a_n}} \prod_{k=1}^{n-1} \frac{z-a_k}{1-z\overline{a_k}}, \qquad n=1,2,\ldots
\end{equation}
where the points $a_n\in\mathbb D$ are adaptively chosen according  to the given function.
These points $a_n$ do not necessarily satisfy the so-called hyperbolic non-separability condition
\begin{equation}\label{conditionTM}
\sum_{n=1}^\infty 1-|a_n|=\infty,
\end{equation}
and so the functions $B_n(z)$ do not necessarily form a complete system in $\mathbf H_2(\mathbb D)$.
This decomposition may be obtained in an adaptive way, see \cite{tao2015}, making
the algorithm more efficient than the greedy algorithm of which it is a variation.\\

In \cite{acqs} the above algorithm is extended to the matrix-valued case and the choice of a point and of a projection is based at each step on the
maximal selection principle. The extension is possible because of the existence of matrix-valued Blaschke factors and is based on the
existence of solutions of interpolation problems in the matrix-valued Hardy space of the disk.\\

When leaving the realm of one complex variable, a number of possibilities occur, and in particular the unit ball $\mathbb B_N$
of $\mathbb C^N$ and the
polydisk. The polydisk case will be studied in a future publication. In this paper we focus on the case of the unit ball. For the present purposes, it is more convenient to consider the
Drury-Arveson space rather than the Hardy space of the ball, and we extend some of the results of \cite{QWa1} and \cite{acqs} to
the setting of the Drury-Arveson space, denoted here ${\mathbf H}({\mathbb B}_N)$.
This is the space with reproducing kernel
\[
\frac{1}{1-\langle z,w\rangle},\quad z,w\in\mathbb B_N,
\]
with
\[
\langle z,w\rangle=\sum_{u=1}^Nz_u\overline{w_u}=zw^*,
\]
where $z=(z_1,\ldots, z_N)$ and $w=(w_1,\ldots, w_N)$ belong to $\mathbb B_N$. This space has a long history (see for instance
\cite{arveson-acta,agler-carthy,MR1882259,btv,MR80c:47010,zbMATH06526232}) and is used in the proof of a von Neumann inequality for row contractions.
Interpolation inside the space ${\mathbf H}({\mathbb B}_N)$ was done in \cite{akap1}. A key tool in \cite{akap1} was the existence in the ball of the counterpart of a Blaschke factor (appearing in
\cite{rudin-ball}; see \eqref{blasBN} below). The existence of these Blaschke factors and the fact that one can solve interpolation
problems in $\mathbf H({\mathbb B}_N)$ allow us to develop the asserted extension.\\

The approach in \cite{akap1} is based on the solution of Gleason's problem. For completeness we recall that
given a space, say $\mathcal F$, of functions analytic in $\Omega\subset\mathbb C^N$, Gleason's problem consists in finding
for every $f\in\mathcal F$ and $a=(a_1,\ldots,a_N)\in\Omega$, functions $g_1(z,a),\ldots,g_N(z,a)\in\mathcal F$ and such that
\begin{equation}
\label{gleasonnnne}
f(z)-f(a)=\sum_{u=1}^N(z_u-a_u)g_u(z,a),\quad z\in\Omega.
\end{equation}

Using power series, one sees that there always exist analytic functions satisfying  \eqref{gleasonnnne}. The requirement is that one can choose them in $\mathcal F$.\\

The paper consists of five sections besides the introduction. In Sections 2 and 3 we review some basic facts on the Drury-Arveson space,
and on the interpolation in it. The latter will be necessary to prove the maximum selection principle. This principle is proved in Section 4.
In Section 5 we prove the convergence of the algorithm. In the last section, which is of independent interest,
we consider infinite Blaschke products. When $N>1$ the $a_n$ in \eqref{conditionTM} are vectors in $\mathbb B_N$ and condition
\eqref{conditionTM} is replaced by the requirement
\[
\sum_{n=1}^\infty\sqrt{1-a_na_n^*}<\infty.
\]

We note that most of the analysis presented here still holds for general complete Nevanlinna-Pick kernels, that is kernels of the form
\[
\frac{1}{c(z)\overline{c(w)}-\langle d(z), d(w)\rangle_{\mathcal H}},
\]
where $c$ is scalar and $d$ is $\mathcal H$-valued where $\mathcal H$ is some Hilbert space or more generally, in some reproducing kernel Hilbert spaces in which Gleason's problem is solvable with bounded operators; see \cite{ak4} for the latter.\\

\section{The Drury-Arveson space}
\setcounter{equation}{0}
We use the multi-index notations
\[
z^\alpha=z_1^{\alpha_1}\cdots z_N^{\alpha_N},\quad\text{\rm and}\quad \alpha!=\alpha_1!\cdots \alpha_N!,
\]
with $z=(z_1,\ldots,z_N)\in\mathbb C^N\quad\text{and}\quad \alpha=(\alpha_1,\ldots,\alpha_N)\in\mathbb N_0^N$.
For $z,w\in\mathbb B_N$ we have
\begin{equation}
\frac{1}{1-zw^*}=\sum_{\alpha\in\mathbb N_0^N}\frac{|\alpha|!}{\alpha!}z^\alpha{\overline{w}}^\alpha.
\label{drury!!!}
\end{equation}
The function \eqref{drury!!!} is thus positive definite in $\mathbb B_N$. The associated reproducing kernel Hilbert space, which we denote by
$\mathbf H({\mathbb B}_N)$, is called the Drury-Arveson space, and can be
characterized as
\begin{equation}
\label{iowa-drury}
\mathbf H({\mathbb B}_N)=\left\{f(z)=\sum_{\alpha\in\mathbb N_0^N} z^\alpha f_\alpha\,\,:\,\,\|f\|^2_{\mathbf H({\mathbb B}_N)}=
\sum_{\alpha\in\mathbb N_0^N}\frac{\alpha!}{|\alpha|!}|f_\alpha|^2<\infty\right\}.
\end{equation}
For $N>1$ the Drury-Arveson space is contractively included in, but different from, the Hardy space of the ball. The latter has reproducing kernel
\[
\frac{1}{(1-\langle z,w\rangle)^N},\quad z,w\in\mathbb B_N.
\]
See \cite{akap3} for an expression for the inner product (not in terms of a surface integral).\\

We define $\left(\mathbf H({\mathbb B}_N)\right)^{n\times m}$ as in \eqref{iowa-drury}, but with now $f_\alpha,g_\alpha\in\mathbb C^{n\times m}$ and define for $f,g\in\left(\mathbf H({\mathbb B}_N)\right)^{n\times m}$, with $g(z)=\sum_{\alpha\in\mathbb N_0^N} z^\alpha g_\alpha$,
\begin{eqnarray}
[f,g]_{\left(\mathbf H({\mathbb B}_N)\right)^{n\times m}}&=&\sum_{\alpha\in\mathbb N_0^N} g_\alpha^* f_\alpha,\quad\text{and}\\
\langle f,g\rangle_{\left(\mathbf H({\mathbb B}_N)\right)^{n\times m}}
&=&{\rm Tr}\,[f,g]_{\left(\mathbf H({\mathbb B}_N)\right)^{n\times m}}.
\end{eqnarray}
In the sequel we will not write anymore explicitly the space in these forms. We will write sometimes $\mathbb C^N$ instead of $\mathbb
C^{1\times N}$.\\

For $a\in\mathbb B_N$ we will use the notations $e_a$ and $b_a$ for the normalized Cauchy kernel and the $\mathbb C^N$-valued Blaschke factor at the point $a$ respectively, that is:
\begin{equation}
\label{blasBN}
e_a(z)=\frac{\sqrt{1-\|a\|^2}}{1-\langle z,a\rangle}\quad \text{and}\quad b_a(z)
=\frac{(1-\|a\|^2)^{1/2}}{1-\langle z,a\rangle}(z-a)(I_N-a^*a)^{-1/2}.
\end{equation}

Let $w\in{\mathbb B}_N$. Then (see \cite{rudin-ball}; another more analytic and maybe easier proof can be found in \cite{akap1}):
\begin{equation}\label{rudin}
\frac{1-b_a(z)b_a(w)^*}{1-zw^*}=\frac{1-aa^*}{(1-za^*)(1-w^*a)}, \quad
z,w\in{\mathbb B}_N.
\end{equation}

Gleason's problem is solvable in the Drury-Arveson space and in the Hardy space;
see \cite{ak4}.

For $a=0$ and by setting $g_u(z,0)=g_u(z)$, a solution is  given by
\[
g_u(z,0)=\int_0^1\frac{\partial}{\partial z_u}f(tz)dt=\sum_{\alpha\in\mathbb N_0^N}\frac{\alpha_u}{|\alpha|}z^{\alpha-\e_u},
\]
where $\e_u$ is the $N$-index with all the other entries equal to $0$, but the $u$-th one equal to $1$, and with the understanding that
\[
\frac{\alpha_u}{|\alpha|}z^{\alpha-\e_u}=0
\]
if $\alpha_u=0$. We set $(R_uf)(z)=\int_0^1\frac{\partial}{\partial z_u}f(tz)dt$. We thus have
\[
f(z)-f(0)=\sum_{u=1}^Nz_u(R_uf)(z).
\]
When $N=1$, then $R_1$ reduces to the classical backward-shift operator which to $f$ associates the
function $\frac{f(z)-f(0)}{z}$ for $z\not=0$ and $f^\prime(0)$ for $z=0$.

\section{Interpolation in the Drury-Arveson space}
\setcounter{equation}{0}

This section is based on \cite{akap1} and reviews the tools necessary to develop the maximum selection principle and the convergence result in the next section. We provide the proofs for completeness.

 \begin{proposition}\label{charonne}
Let $0\not=c\in{\mathbb C}^{n\times 1}$, $a\in{\mathbb B}_N$, and let
$f\in{\mathbf H}({\mathbb B}_N)^{n\times 1}$. Then
$$c^*f(a)=0\iff f(z)=B(z)g(z),$$
where $B$ is given by
\begin{equation}\label{pascal}
B(z)=U\left(\begin{array}{cc}b_a(z)&0_{1\times (n-1)}\\
0_{(n-1)\times N}&I_{n-1}\end{array}
\right),\end{equation}
where $b_a(z)\in\mathbb C^{1\times N}$, $U\in\mathbb C^{n\times n}$ is a unitary matrix with the first column equal to $\frac{c}{c^*c}$, and $g$ is an arbitrary
element of ${\bf H}({\mathbb B}_N)^{(N+n-1)\times 1}$.
\end{proposition}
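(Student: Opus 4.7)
The plan is to prove both implications, using two features of $B$: the Blaschke factor $b_a$ satisfies $b_a(a)=0$ by \eqref{blasBN}, and the first column of the unitary $U$ is a unit vector proportional to $c$ (I read the statement's $c/(c^*c)$ as a typo for $c/\sqrt{c^*c}$), so that $c^*U=(\sqrt{c^*c},0,\dots,0)$. For the ``if'' direction, assuming $f=Bg$ and evaluating at $z=a$ gives
\[
B(a)=U\begin{pmatrix} 0_{1\times N} & 0_{1\times(n-1)}\\ 0_{(n-1)\times N} & I_{n-1}\end{pmatrix},
\]
whose top row is zero. Since $c^*U$ is supported on the first coordinate, $c^*B(a)=0$ follows, and hence $c^*f(a)=c^*B(a)g(a)=0$.

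For the converse I would reduce to a scalar factorization. Set
\[
\tilde f(z):=U^*f(z)=\begin{pmatrix} h_1(z) \\ h_2(z)\end{pmatrix},
\]
with $h_1\in\mathbf H(\mathbb B_N)$ and $h_2\in\mathbf H(\mathbb B_N)^{(n-1)\times 1}$. Since the first row of $U^*$ is $c^*/\sqrt{c^*c}$, the hypothesis $c^*f(a)=0$ forces $h_1(a)=0$. The key step is then the division claim: any scalar $h_1\in\mathbf H(\mathbb B_N)$ with $h_1(a)=0$ admits a factorization $h_1=b_ak$ for some $k\in\mathbf H(\mathbb B_N)^{N\times 1}$. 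I would derive this from the Rudin identity \eqref{rudin}, rewritten as
\[
\frac{1}{1-zw^*}=\frac{b_a(z)b_a(w)^*}{1-zw^*}+e_a(z)\overline{e_a(w)},
\]
which exhibits the reproducing kernel of $\mathbf H(\mathbb B_N)$ as a sum of two positive kernels. The corresponding subspaces $b_a\mathbf H(\mathbb B_N)^{N\times 1}$ and $\mathbb Ce_a$ are mutually orthogonal (every element of the first vanishes at $a$, while $e_a(a)\ne 0$), yielding the orthogonal decomposition
\[
\mathbf H(\mathbb B_N)=b_a\mathbf H(\mathbb B_N)^{N\times 1}\oplus\mathbb Ce_a.
\]
Since $e_a$ spans the orthogonal complement of $\ker\mathrm{ev}_a$, the condition $h_1(a)=0$ places $h_1$ in $b_a\mathbf H(\mathbb B_N)^{N\times 1}$.

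Once $k$ is produced, the proposition wraps up by reassembly:
\[
\tilde f(z)=\begin{pmatrix} b_a(z) & 0\\ 0 & I_{n-1}\end{pmatrix}\begin{pmatrix} k(z) \\ h_2(z)\end{pmatrix},
\]
and left-multiplication by $U$ yields $f=Bg$ with $g=(k^T,h_2^T)^T\in\mathbf H(\mathbb B_N)^{(N+n-1)\times 1}$. The main obstacle throughout is the division step; if one prefers to avoid general reproducing-kernel arguments, Gleason solvability (recalled at the end of Section 2) supplies $p\in\mathbf H(\mathbb B_N)^{N\times 1}$ with $h_1(z)=(z-a)p(z)$, and setting
\[
k(z):=(I_N-a^*a)^{1/2}\,\frac{1-za^*}{(1-\|a\|^2)^{1/2}}\,p(z)
\]
one verifies directly that $b_ak=h_1$; membership $k\in\mathbf H(\mathbb B_N)^{N\times 1}$ follows from boundedness of multiplication by the polynomial $1-za^*$ on the Drury--Arveson space.
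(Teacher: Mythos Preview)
Your argument is correct. The core idea---Rudin's identity \eqref{rudin} yielding a positive-kernel decomposition, hence an orthogonal splitting of the Drury--Arveson space into a one-dimensional piece and the range of a Blaschke multiplier---is exactly what the paper does as well. The organizational difference is that the paper carries out the kernel decomposition directly at the $\mathbb C^{n}$-valued level, computing
\[
\frac{I_n-B(z)B(w)^*}{1-zw^*}=\frac{cc^*}{c^*c}\cdot\frac{1-aa^*}{(1-za^*)(1-aw^*)}
\]
and reading off $\mathcal H_1={\rm span}\{c\,e_a\}$ and $\mathcal H_1^\perp=B\,\mathbf H(\mathbb B_N)^{N+n-1}$ in one stroke, whereas you first conjugate by $U^*$ to peel off the scalar component $h_1$ and then invoke the scalar ($n=1$) version of the same decomposition. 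Your reduction is a touch more elementary and makes the role of $U$ transparent; the paper's version is more compact and immediately gives the quotient-norm description of $\mathcal H_1^\perp$.

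Your Gleason-based alternative for the division step is a genuine addition not present in the paper: it trades the reproducing-kernel machinery for an explicit formula $k=(I_N-a^*a)^{1/2}(1-\|a\|^2)^{-1/2}(1-za^*)p$, at the modest cost of checking that the polynomial $1-za^*$ multiplies $\mathbf H(\mathbb B_N)$ boundedly. Finally, your reading of $c/(c^*c)$ as $c/\sqrt{c^*c}$ is indeed the intended normalization.
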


\begin{proof} We recall the proof of the proposition; see \cite[Proposition 4.5, p. 15]{akap1}. We note that
\[
c^*U=\begin{pmatrix}1&0&\cdots&0\\
                               0&0&\cdots&0\\
                                 & & &\\
0&0&\cdots&0\end{pmatrix},
\]
 and hence
\[
c^*B(a)=\begin{pmatrix}1&0&\cdots&0\\
                               0&0&\cdots&0\\
                                 & & &\\
0&0&\cdots&0\end{pmatrix}\begin{pmatrix}0_{1\times N}&0_{1\times (n-1)}\\
0_{(n-1)\times N}&I_{n-1}\end{pmatrix}=0_{n\times (N+(n-1))}.
 \]
and so every function of the form $Bg$ with $g\in{\mathbf H}({\mathbb B}_N)^{(N+n-1)\times 1}$ is a solution of the interpolation problem.
To prove the converse statement, we first remark that
\[
\frac{I_n-B(z)B(w)^*}{1-zw^*}=\frac{cc^*}{c^*c}\frac{1-aa^*}{(1-za^*)(1-w^*a)}.
\]
It follows that the one dimensional subspace $\mathcal H_1$ of ${\mathbf H}({\mathbb B}_N)^n$ spanned by the vector $\frac{\frac{c}{c^*c}}{1-za^*}$
has reproducing kernel $\frac{I_n-B(z)B(w)^*}{1-zw^*}$. Thus the decomposition of kernels
\[
\frac{I_n}{1-zw^*}=\frac{I_n-B(z)B(w)^*}{1-zw^*}+\frac{B(z)B(w)^*}{1-zw^*}
\]
leads to an orthogonal decomposition of the space ${\bf H}({\mathbb B}_N)^n$ as
\[
{\mathbf H}({\mathbb B}_N)^n=\mathcal H_1\oplus\mathcal H_1^{\perp},
\]
where $\mathcal H_1^\perp$ is the subspace of ${\mathbf H}({\mathbb B}_N)^n$ consisting of functions $g$ such that
$c^*g(a)=0$. Since the reproducing kernel of $\mathcal H_1^\perp$ is $\frac{B(z)B(w)^*}{1-zw^*}$ we have
\[
\mathcal H_1^\perp=\left\{Bg\,;\, g\in {\mathbf H}({\mathbb B}_N)^{(N+n-1)}\right\},
\]
with norm
\[
\|Bg\|_{{\mathbf H}({\mathbb B}_N)^n}=\inf_{g\in{\mathbf H}({\mathbb B}_N)^{(N+n-1)}}\|g\|_{{\mathbf H}({\mathbb B}_N)^{(N+n-1)}}.
\]
\end{proof}

We note that we do not write the dependence of $B$ on $a$ and $c$.

\begin{definition}
The $\mathbb C^{n\times (N+n-1)}$-valued function $B$ is an elementary Blaschke factor. A (possibly infinite) Blaschke product is a product of terms of the form \eqref{pascal} of compatible (growing) sizes.
\end{definition}

\begin{remark} {\rm Let $\mathcal B$ be a $\mathbb C^{n\times m}$-valued Blaschke product (or taking values operators from
$\mathbb C^n$ into $\ell_2$ if $m=\infty$). Then $\mathcal B$ is a Schur multiplier, meaning that the kernel
$\frac{I_n-\mathcal B(z)\mathcal B(w)^*}{1-\langle z,w\rangle}$ is positive definite in $\mathbb B_N$. When $N>1$, the family of Schur multipliers is strictly
included in the family of fucntions analytic and contractive in the unit ball. For the realization theory of Schur multipliers, see for
instance \cite{MR2394102,btv}.}
\end{remark}

More generally than \eqref{charonne} we have (see \cite[Theorem 5.2, p. 17]{akap1}):

\begin{theorem}\label{qwerty}
Given $a_1,\ldots ,a_M\in{\mathbb B}_N$ and vectors $c_1,\ldots , c_M
\in{\mathbb C}^{n\times 1}$ different from
$0_{n\times 1}$, a function $f\in{\bf H}({\mathbb B}_N)^{n\times 1}$ satisfies
$$c_j^*f(a_j)=0,\quad j=1,\ldots, M$$
if and only if it is of the form $f(z)=B(z)u(z)$,
where $B(z)$ is a rational
${\mathbb C}^{n\times (n+k(N-1))}$-valued function, for some integer $k\le M$,
taking coisometric values on the boundary of ${\mathbb B}_N$, and $u$ is an
arbitrary element in ${\mathbf H}({\mathbb B}_N)^{(n+k(N-1))\times 1}$.
\end{theorem}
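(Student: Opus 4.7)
The plan is induction on the number $M$ of interpolation nodes, using Proposition~\ref{charonne} both as base case and as engine of the inductive step. For $M=1$, Proposition~\ref{charonne} directly yields an elementary Blaschke factor $B$ of size $n\times(n+N-1)$, so $k=1$ works.

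For the inductive step, assume the statement for any set of at most $M-1$ conditions. Given $f$ satisfying the $M$ equations, apply Proposition~\ref{charonne} to the condition $c_1^*f(a_1)=0$ to obtain $f(z)=B_1(z)g(z)$, where $B_1$ is the elementary $n\times(n+N-1)$-valued Blaschke factor attached to $(a_1,c_1)$ and $g\in\mathbf{H}({\mathbb B}_N)^{(n+N-1)\times 1}$. The remaining equations $c_j^*f(a_j)=0$, $j=2,\dots,M$, translate into $\tilde c_j^*g(a_j)=0$ with $\tilde c_j:=B_1(a_j)^*c_j\in\mathbb C^{(n+N-1)\times 1}$. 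Indices for which $\tilde c_j=0$ impose no constraint on $g$ and are simply discarded; the remaining ones number at most $M-1$. Applying the inductive hypothesis to $g$ produces $g=\tilde B u$ with $\tilde B$ rational of size $(n+N-1)\times\bigl((n+N-1)+k'(N-1)\bigr)$, coisometric on $\partial\mathbb B_N$, and $k'\le M-1$. Setting $B:=B_1\tilde B$ and $k:=k'+1\le M$ yields the claimed factorization.

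The coisometric-on-the-boundary property is preserved because a product of compatibly-sized coisometric-valued matrices is coisometric, and each elementary factor~\eqref{pascal} satisfies $B_1(z)B_1(z)^*=I_n$ on $\partial\mathbb B_N$: specializing~\eqref{rudin} at $w=z$ with $zz^*=1$ gives $b_a(z)b_a(z)^*=1$, which together with the block structure of~\eqref{pascal} and the unitarity of $U$ yields the desired identity. The converse direction \textit{($f=Bu\Rightarrow c_j^*f(a_j)=0$)} is built into the recursive construction: by Proposition~\ref{charonne} one has $c_1^*B_1(a_1)=0$, so $c_1^*B(a_1)=0$; for $j\ge 2$, $c_j^*B(a_j)=\tilde c_j^*\tilde B(a_j)$, which vanishes either trivially (when $\tilde c_j=0$) or by the inductive hypothesis applied to $\tilde B$.

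The delicate point is the bookkeeping of the degenerate indices where $\tilde c_j=0$ at the next level: ignoring this possibility would falsely predict $k=M$ exactly, whereas the correct statement allows $k<M$. Dropping trivial constraints before descending handles this without further effort and keeps $k$ bounded by the number of originally imposed conditions, so $B$ is a finite, hence rational, product of elementary Blaschke factors.
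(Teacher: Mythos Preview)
Your proof is correct and follows essentially the same approach as the paper: both argue by repeated application of Proposition~\ref{charonne}, peeling off one elementary Blaschke factor at a time and noting that the induced constraint at the next level may be vacuous (your $\tilde c_j=0$, the paper's $c_2^*B_1(a_2)=0$), which accounts for the possibility $k<M$. Your write-up is in fact more complete than the paper's, since you explicitly verify the coisometric boundary values and the converse implication.
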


\begin{proof} Indeed, starting with $j=1$ we have that $f=B_1g_1$, where $B_1$ is given by \eqref{pascal} with $a=a_1$ (and
an appropriately constructed matrix $U$) and $g_1\in{\bf H}({\mathbb B}_N)^{(N+n-1)\times 1}$. The interpolation condition $c_2^*f(a_2)=0$ becomes
\begin{equation}
\label{cond2}
c_2^*B_1(a_2)g_1(a_2)=0.
\end{equation}
If $c_2^*B_1(a_2)=0_{1\times (N+n-1)}$, any $g_1$ will be a solution. Otherwise, we solve \eqref{cond2} using Proposition \ref{charonne} and get
\[
g_1(z)=B_2(z)g_2(z),
\]
where $B_2$ is $\mathbb C^{(n+(N-1))\times (n+2(N-1))}$-valued and obtained from \eqref{pascal} with $a=a_2$ and an appropriately constructed matrix $U$.
Iterating this proceduce we obtain the result. The fact that $k$ may be strictly smaller than $M$ comes from the possibility that conditions as \eqref{cond2} occur. This will
not happen when $N=1$ and when all the $a_j$ chosen are different.
\end{proof}
\section{The maximum selection principle}
\setcounter{equation}{0}

The proof is similar to the one in the original paper \cite{QWa1} and in \cite{acqs}, but one relevant difference is the
use of orthogonal projections in $\mathbb C^{n\times n}$ of fixed rank. The fact that the set of such projections is compact in
$\mathbb C^{n\times n}$ ensures the existence of a maximum. Besides the use of the normalized Cauchy kernel,
the possibility of approximating by polynomials is a key tool in the proof.

\begin{proposition}
\label{prop42}
Let $B$ be a $\mathbb C^{u\times n}$-valued rational function of the variables $z_1,\ldots, z_N$, analytic in an neighborhood of the
closed unit ball $\overline{\mathbb B_N}$, and taking co-isometric values on the unit sphere, let
$r_0\in\left\{1,\ldots, n\right\}$, and let $F\in\mathbf H({\mathbb B}_N)^{n\times m}$. There exists $w_0\in\mathbb B_N$ and
a $\mathbb C^{n\times n}$-valued orthogonal projection $P_0$ of rank $r_0$ such that
\[
(1-\|w_0\|^2)\left({\rm Tr}~[B(w_0)P_0F(w_0),B(w_0)P_0F(w_0)]\right)\,\,\,\text{is maximum.}
\]
\end{proposition}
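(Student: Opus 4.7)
The plan is to study the non-negative continuous function
\begin{equation*}
\phi(w,P):=(1-\|w\|^2)\,{\rm Tr}\bigl(F(w)^*P\,B(w)^*B(w)\,P\,F(w)\bigr)
\end{equation*}
on the product $\mathbb B_N\times\mathcal P_{r_0}$, where $\mathcal P_{r_0}$ denotes the set of rank-$r_0$ orthogonal projections in $\mathbb C^{n\times n}$. The set $\mathcal P_{r_0}$ is compact (a Grassmannian), $F$ is analytic on $\mathbb B_N$, and $B$ is continuous on $\overline{\mathbb B_N}$, so $\phi$ is jointly continuous. The only obstruction to attaining a maximum is that a maximising sequence in $w$ might escape to the boundary of $\mathbb B_N$; the heart of the proof will be to show that $\phi(w,P)\to 0$ as $\|w\|\to 1^-$, uniformly in $P$.

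First I would record two elementary bounds. Applying the reproducing-kernel estimate $|f(w)|^2\le\|f\|^2/(1-\|w\|^2)$ entrywise to $F$ (and summing over entries) yields
\begin{equation*}
(1-\|w\|^2)\,{\rm Tr}\bigl(F(w)^*F(w)\bigr)\le\|F\|^2.
\end{equation*}
On the other hand, since $B$ is rational and analytic in a neighbourhood of $\overline{\mathbb B_N}$, its operator norm is bounded there by some constant $M$; together with $\|P\|_{\rm op}\le 1$ this gives $\phi(w,P)\le M^2\|F\|^2$, so $\sup\phi$ is finite.

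The main step is the uniform boundary vanishing, for which I would invoke the density of matrix polynomials in $\mathbf H(\mathbb B_N)^{n\times m}$. Given $\varepsilon>0$, choose a matrix polynomial $Q$ with $\|F-Q\|<\varepsilon$ and estimate
\begin{equation*}
\phi(w,P)^{1/2}\le M(1-\|w\|^2)^{1/2}\bigl(\|(F-Q)(w)\|+\|Q(w)\|\bigr)\le M\|F-Q\|+M(1-\|w\|^2)^{1/2}\|Q(w)\|,
\end{equation*}
where $\|\cdot\|$ denotes the Frobenius norm of a matrix, and the first summand is controlled by the kernel bound applied to $F-Q$. Since $Q$ is a polynomial, $\|Q(w)\|$ is bounded on $\overline{\mathbb B_N}$, so the second summand tends to $0$ as $\|w\|\to 1^-$; note that this decay does not depend on $P$. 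Hence $\limsup_{\|w\|\to 1}\phi(w,P)^{1/2}\le M\varepsilon$ uniformly in $P$, and letting $\varepsilon\to 0$ yields the desired uniform vanishing.

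To conclude, pick a maximising sequence $(w_k,P_k)$. If $\sup\phi=0$ any pair is a maximiser; otherwise the uniform boundary vanishing forces $w_k$ eventually into some compact set $K\subset\mathbb B_N$, and compactness of $K\times\mathcal P_{r_0}$ together with continuity of $\phi$ produces a convergent subsequence whose limit $(w_0,P_0)$ realises the supremum. The main obstacle is precisely this uniform-in-$P$ boundary vanishing, which is exactly why the compactness of the set of rank-$r_0$ projections and the polynomial-approximation ingredient are singled out in the preamble of the proposition.
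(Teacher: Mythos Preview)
Your proof is correct and follows essentially the same strategy as the paper: bound $\phi$ via the reproducing-kernel inequality, approximate $F$ by a matrix polynomial to obtain uniform-in-$P$ boundary vanishing, and then invoke compactness of $\mathcal P_{r_0}$ together with a maximising subsequence. The only cosmetic difference is that the paper exploits the co-isometry of $B$ on the sphere to get $\|B(w)\|\le 1$ inside (so $M=1$), whereas you use a generic bound $M$ coming from analyticity on a neighbourhood of $\overline{\mathbb B_N}$; your estimate on $\phi^{1/2}$ is slightly cleaner than the paper's use of $(a+b)^2\le 2a^2+2b^2$.
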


\begin{proof}
We first recall that for $f\in\mathbf H(\mathbb B_N)$ (that is, $n=m=1$), with power series $f(z)=\sum_{\alpha\in\mathbb N_0^N} f_\alpha z^\alpha$, and for
$w\in\mathbb B_N$, we have
\begin{equation}
\label{ineq11}
\sqrt{1-\|w\|^2}|f(w)|=|[f,e_{w}]|\le \|f\|.
\end{equation}
Let $F=(f_{ij})\in\mathbf H(\mathbb B_N)^{n\times m}$, where the entries $f_{ij}\in\mathbf H(\mathbb B_N)$ ($i=1,\ldots, n$ and $j=1,\ldots m$),
and let $P$ denote a projection of rank $r_0$. Then:
\[
\begin{split}
{\rm Tr}~ F(w)^*P B(w)^*B(w)PF(w)&\le {\rm Tr}~F(w)^*F(w)\quad (\text{\rm since $B(w)$ is contractive inside the sphere})\\
&=\sum_{i=1}^n\sum_{j=1}^m|f_{ij}(w)|^2.
\end{split}
\]
Hence, using \eqref{ineq11} for every $f_{ij}$, we obtain
\begin{equation}
\label{macau1234}
(1-\|w\|^2)\left({\rm Tr}~[B(w)P F(w),B(w)P F(w)]\right)\le\sum_{i=1}^n\sum_{j=1}^m\|f_{ij}\|^2=\|F\|^2.
\end{equation}
Let $\epsilon>0$. In view of the power series expansion characterization \eqref{iowa-drury} of the elements of the Drury--Arveson space,
there exists a $\mathbb C^{n\times m}$-valued polynomial $p$ in
$z_1,\ldots, z_N$ such that $\|F-p\|\le \epsilon$.
We have
\[
\begin{split}
(1-\|w\|^2)\left({\rm Tr}~[B(w)P F(w),B(w)PF(w)]\right)&\\
&\hspace{-3cm}
\le(1-\|w\|^2)\left({\rm Tr}~[F(w),F(w)]\right)\\
&\hspace{-3cm}=(1-\|w\|^2)\|(F-p)(w)+ p(w)\|^2\\
&\hspace{-3cm}\le2(1-\|w\|^2)\left(\|(F-p)(w)\|+\| p(w)\|\right)^2\\
&\hspace{-3cm}\le2(1-\|w\|^2)\|(F-p)(w)\|^2+2(1-\|w\|^2)\| p(w)\|^2\\
&\hspace{-3cm}\le 2\|F-p\|^2+2(1-\|w\|^2)\| p(w)\|^2 \quad (\text{where we have used \eqref{ineq11}})\\
&\hspace{-3cm}\le 2\epsilon^2+2(1-\|w\|^2)\|p(w)\|^2 .
\end{split}
\]
Since $(1-\|w\|^2)\|p(w)\|^2$ tends to $0$ as $w$ approaches the unit sphere,
the expression $(1-\|w\|^2)\left({\rm Tr}~[B(w)PF(w), B(w)PF(w)]\right)$ can be made arbitrary small, uniformly with
respect to $P$, as $w$ approaches the unit sphere. Thus,
\[
(1-\|w\|^2)\left({\rm Tr}~[B(w)P F(w), B(w)P F(w)]\right)
\]
is uniformly bounded as $w\in\mathbb B_N$ and $P$ runs through the projections of rank $r_0$, and goes to $0$ as $w$ tends to the boundary.
It has therefore a finite supremum, which is in fact a maximum and is in $\mathbb B_N$ (and not on the boundary), as
is seen by taking a subsequence tending to this supremum,  and this ends the proof.
\end{proof}

Let us rewrite $F(z)$ as
\begin{equation}
\label{bastille1}
F(z)=P_0F(w_0)e_{w_0}(z)\sqrt{1-\|w_0\|^2}+F(z)-P_0F(w_0)e_{w_0}(z)\sqrt{1-\|w_0\|^2}.
\end{equation}
We now show that \eqref{bastille1} gives an orthogonal decomposition of $F$, which is the first step in the
expansion of $F$ that we are looking for (see \eqref{bastille11} for a more precise way of writing the decomposition)
and for the algorithm that will arise repeating this construction.
\begin{lemma}
Let
\[
\begin{split}
H(z)&=F(z)-P_0F(w_0)e_{w_0}(z)\sqrt{1-\|w_0\|^2}\\
H_0(z)&=P_0F(w_0)e_{w_0}(z)\sqrt{1-\|w_0\|^2},
\end{split}
\]
where $w_0,P_0$ are as in Proposition \ref{prop42}. It holds that
\begin{equation}
\label{xiHz0}
P_0H({w_0})=0
\end{equation}
and
\[
[F,F]=[H_0,H_0]+[H,H].
\]
\end{lemma}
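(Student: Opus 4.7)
The plan is to handle the two assertions in turn. Both reduce to short computations: the first is a direct evaluation using the idempotency of $P_0$, and the second is an orthogonality argument against the reproducing kernel; there is no serious obstacle.

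For the pointwise identity $P_0 H(w_0) = 0$, I would just evaluate at $z = w_0$. From the definition of the normalized Cauchy kernel in \eqref{blasBN},
\[
e_{w_0}(w_0)\sqrt{1-\|w_0\|^2} = \frac{1-\|w_0\|^2}{1-\|w_0\|^2} = 1,
\]
so that $H_0(w_0) = P_0 F(w_0)$ and $H(w_0) = (I_n - P_0)F(w_0)$. Applying $P_0$ and using $P_0^2 = P_0$ gives $P_0 H(w_0) = P_0 F(w_0) - P_0 F(w_0) = 0$.

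For the decomposition $[F,F] = [H_0, H_0] + [H, H]$, I would expand by the sesquilinearity of the bracket,
\[
[F, F] = [H_0, H_0] + [H_0, H] + [H, H_0] + [H, H],
\]
and reduce to showing $[H, H_0] = 0$, the other cross term being then $[H_0, H] = [H, H_0]^* = 0$. Setting $M := P_0 F(w_0)$, one has
\[
H_0(z) = M\,e_{w_0}(z)\sqrt{1-\|w_0\|^2} = \frac{(1-\|w_0\|^2)\,M}{1-\langle z, w_0\rangle},
\]
which is a constant matrix times the scalar reproducing kernel of $\mathbf H(\mathbb B_N)$ at $w_0$. The reproducing property at $w_0$, read for the matrix-valued bracket, then yields
\[
[H, H_0] = (1-\|w_0\|^2)\,M^* H(w_0) = (1-\|w_0\|^2)\,F(w_0)^* P_0 H(w_0) = 0,
\]
where we used $P_0^* = P_0$ in the middle step and the first assertion in the last.

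The only point to verify---and it is minor---is that the matrix-valued bracket indeed satisfies $[G,\,M/(1-\langle \cdot, w_0\rangle)] = M^* G(w_0)$ for constant matrices $M$. This follows column by column from the reproducing property of $\mathbf H(\mathbb B_N)$ together with the power series identity \eqref{drury!!!}. Beyond this, the argument uses nothing more than $P_0^2 = P_0$ and $P_0^* = P_0$.
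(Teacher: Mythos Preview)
Your proof is correct and follows essentially the same approach as the paper's: both verify $P_0H(w_0)=0$ by direct evaluation using $e_{w_0}(w_0)\sqrt{1-\|w_0\|^2}=1$, and both obtain the orthogonal decomposition by computing $[H,H_0]=(1-\|w_0\|^2)F(w_0)^*P_0H(w_0)=0$ via the reproducing property together with $P_0^*=P_0$. Your write-up is slightly more detailed (e.g.\ you make explicit that $H(w_0)=(I_n-P_0)F(w_0)$ and spell out why the matrix-valued bracket reproduces), but there is no substantive difference.
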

\begin{proof}
First we have \eqref{xiHz0} since
\[
P_0H(w_0)=P_0F(w_0)-P_0F(w_0)e_{w_0}(w_0)\sqrt{1-\|w_0\|^2}=0.
\]
Using \eqref{xiHz0} we have
\[
[H,P_0F(w_0)e_{w_0}(z)\sqrt{1-\|w_0\|^2}]=F(w_0)^*P_0H(w_0)(1-\|w_0\|^2)=0.
\]
So, $[H,H_0]=0$ and
\[
[F,F]=[H_0+H,H_0+H]=[H_0,H_0]+[H,H].
\]
\end{proof}

\section{The algorithm}
\label{sec3}
\setcounter{equation}{0}

To proceed and take care of the condition \eqref{xiHz0} (that is, in the scalar case, to divide by a Blaschke factor) we use a factor
of the form \eqref{pascal}. Then, we use Theorem \ref{qwerty} to find a $\mathbb C^{n\times (n+r_0^\prime (N-1))}$-valued rational
function $B_{w_0,P_0}$ with $r_0^\prime\le r_0$ and such that
\[
{\rm ran}\, P_0e_{w_0}=\mathbf H(\mathbb B_N)^{n\times m}\ominus B_{w_0,P_0} (\mathbf H(\mathbb B_N))^{(n+r_0^\prime (N-1))\times m},
\]
and so
\begin{equation}
\label{leiden}
\mathbf H(\mathbb B_N)^{n\times m}=
\left(\mathbf H(\mathbb B_N)^{n\times m}\ominus B_{w_0,P_0}(\mathbf H(\mathbb B_N))^{(n+r_0^\prime (N-1))\times m}\right)\oplus
B_{w_0,P_0}\mathbf H(\mathbb B_N))^{(n+r_0^\prime (N-1))\times m}.
\end{equation}

Let $F\in\left(\mathbf H(\mathbb B_N)\right)^{n\times m}$. We choose $w_0\in\mathbb B_N$ and $r_0\in\left\{1,\ldots, n\right\}$.
Using the maximum selection principle with $B(z)=I_n$ we get a decomposition of the form \eqref{leiden}. We rewrite \eqref{bastille1} as

\begin{equation}
\label{bastille11}
F(z)=P_0F(w_0)e_{w_0}(z)\sqrt{1-\|w_0\|^2}+B_{w_0,P_0}(z)F_1(z),
\end{equation}
where $F_1\in\left(\mathbf H(\mathbb B_N)\right)^{(n+r_0^\prime(N-1))\times m}$ (which, as $F_1$ is uniquely defined when $N>1$).
We now select $w_1\in\mathbb B_N$ and $r_1\in\left\{1,\ldots,n+r_0^\prime(N-1)\right\}$, and apply the maximum
selection principle to the pair $(B_{w_0,P_0}(z),F_1(z))$. We have then
\begin{equation}
F_1(z)=P_1F_1(w_1)e_{w_1}(z)\sqrt{1-\|w_1\|^2}+B_{w_1,P_1}(z)F_2(z),
\end{equation}
where $F_2\in\left(\mathbf H(\mathbb B_N)\right)^{(n+(r_0^\prime+r_1^\prime)(N-1))\times m}$ (with $r_1^\prime\le r_1$)
is not uniquely defined when $N>1$. So
\[
\begin{split}
F(z)&=P_0F(w_0)e_{w_0}(z)\sqrt{1-\|w_0\|^2}+B_{w_0,P_0}(z)P_1F_1(w_1)e_{w_1}(z)\sqrt{1-\|w_1\|^2}+\\
&\hspace{5mm}+B_{w_0,P_0}(z)B_{w_1,P_1}(z)F_2(z).
\end{split}
\]

We iterate the procedure with the pair $(B_{w_0,P_0}(z)B_{w_1,P_1}(z), F_2(z))$ and  observe the appearance of the Blaschke product
\[
\mathcal B_k(z)=B_{w_0,P_0}B_{w_1,P_1}B_{w_2,P_2}\cdots
B_{w_{k-1},P_{k-1}},\,\,\text{\rm for}\,\, k\ge 1,
\]
which will be $\mathbb C^{n\times (1+s_k(N-1))}$-valued for some $s_k\le\sum_{j=0}^{k-1}r_j$.
We set
\begin{equation}
\label{maastricht}
M_k=F_k(w_k)\in\mathbb C^{s_k\times m},
\end{equation}
and
\[
\mathfrak{B}_k(z)=\begin{cases}\,\, \sqrt{1-\|w_0\|^2}e_{w_0}(z)\,\,\quad\text{\rm for}\,\, k=0,\\
                                                                \,\, \sqrt{1-\|w_k\|^2}e_{w_k}(z)B_{w_0,P_0}(z)B_{w_1,P_1}(z)B_{w_2,P_2}(z)\cdots
B_{w_{k-1},P_{k-1}}(z)\,\,\text{\rm for}\,\, k\ge 1.\end{cases}
\]
Note that
\begin{equation}
\label{roosendaal}
\mathfrak B_k(w_k)=\mathcal B_k(w_k),\quad k\ge 1.
\end{equation}
We have
\begin{equation}
F(z)=\sum_{k=0}^{u}\mathfrak B_k(z)M_k+\mathcal B_{u+1}(z)F_{u+1}(z).
\label{cologne}
\end{equation}
Moreover,
\begin{equation}
\label{groningen}
\langle \mathfrak B_kM_k\, ,\,\mathfrak B_\ell M_\ell\rangle_{\mathbf H(\mathbb B_N)}=0\quad\text{\rm for}\quad k\not=\ell
\end{equation}
and we have by the orthogonality of the decomposition that
\begin{equation}
\label{groningen2}
\|F\|^2_{\mathbf H(\mathbb B_N)}=
\sum_{k=0}^u\|\mathfrak B_kM_k\|^2_{\mathbf H(\mathbb B_N)}+\|
\mathcal B_{u+1}(z)F_{u+1}\|^2_{\mathbf H(\mathbb B_N)}.
\end{equation}

This recursive procedure gives, at the $k$-th step, the best approximation. However we have to ensure that when $k$ tends to
infinity the algorithm converges. This is guaranteed by virtue of the next result.

\begin{theorem}
Suppose that in \eqref{cologne} at each step one selects $w_k$ and $P_k$ according to the maximum selection principle applied to
$(\mathcal B_{w_k}(z), F_k(z))$. Then the algorithm converges, meaning that
\[
F(z)=\sum_{k=0}^\infty \mathfrak{B}_k(z)M_k
\]
in the norm of the Drury-Arveson space.
\end{theorem}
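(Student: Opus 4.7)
The proof has two stages: first one shows that the series $\sum_k\mathfrak B_k M_k$ converges in $\mathbf H(\mathbb B_N)^{n\times m}$, and second one identifies the limit as $F$.

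For the first stage I would invoke the pairwise orthogonality \eqref{groningen} and the energy identity \eqref{groningen2}, which together force $\sum_{k=0}^\infty\|\mathfrak B_k M_k\|^2\le\|F\|^2<\infty$. By orthogonality, the partial sums $S_u=\sum_{k=0}^u\mathfrak B_k M_k$ are Cauchy in $\mathbf H(\mathbb B_N)^{n\times m}$ and converge to some $G$. Setting $R:=F-G$, equations \eqref{cologne} and \eqref{groningen2} give $\mathcal B_{u+1}F_{u+1}\to R$ in norm and $\|R\|^2=L:=\|F\|^2-\sum_{k=0}^\infty\|\mathfrak B_k M_k\|^2\ge 0$. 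The theorem is therefore equivalent to showing $L=0$.

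For the second stage I would use Proposition \ref{prop42} to transfer the norm decay $\|\mathfrak B_k M_k\|\to 0$ into a pointwise decay of $\mathcal B_k(w)F_k(w)$. By the construction of $(w_k,P_k)$,
\[\|\mathfrak B_k M_k\|^2\;=\;\max_{w\in\mathbb B_N,\;P\text{ rank }r_k}(1-\|w\|^2)\,\mathrm{Tr}\,[\mathcal B_k(w)PF_k(w),\mathcal B_k(w)PF_k(w)],\]
so the functional on the right tends to $0$ uniformly over admissible $(w,P)$. Fix $w\in\mathbb B_N$ and a unit vector $y\in\mathbb C^m$, and choose any rank-$r_k$ projection $P_{k,w,y}$ whose range contains the vector $F_k(w)y$; then $P_{k,w,y}F_k(w)y=F_k(w)y$, whence $\mathcal B_k(w)P_{k,w,y}F_k(w)y=\mathcal B_k(w)F_k(w)y$. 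Since the Frobenius norm dominates the action on any unit vector,
\[(1-\|w\|^2)\,\|\mathcal B_k(w)F_k(w)y\|^2\le(1-\|w\|^2)\,\|\mathcal B_k(w)P_{k,w,y}F_k(w)\|_F^2\le\|\mathfrak B_k M_k\|^2\to 0.\]
Summing over $y$ running through an orthonormal basis of $\mathbb C^m$ gives $(1-\|w\|^2)\|\mathcal B_k(w)F_k(w)\|_F^2\to 0$, and hence $\mathcal B_k(w)F_k(w)\to 0$ for every $w\in\mathbb B_N$.

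To conclude I would combine this pointwise decay with the fact that norm convergence in the reproducing kernel Hilbert space $\mathbf H(\mathbb B_N)^{n\times m}$ implies pointwise convergence: from $\mathcal B_{u+1}F_{u+1}\to R$ in norm one gets $\mathcal B_{u+1}(w)F_{u+1}(w)\to R(w)$ for every $w$, so $R(w)=0$ for all $w\in\mathbb B_N$ and $R\equiv 0$. Hence $L=0$ and $F=\sum_{k=0}^\infty\mathfrak B_k M_k$ in the Drury-Arveson norm, as claimed. The main obstacle --- genuinely multidimensional, and invisible in the scalar case of \cite{QWa1} --- is the rank-$r_k$ constraint on the projections $P$ appearing in the maximum selection principle: one cannot directly read off a bound on the full matrix quantity $\|\mathcal B_k(w)F_k(w)\|_F^2$ from a maximum taken only over projections of a fixed intermediate rank. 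The columnwise workaround above, in which one tailors for each test direction $y$ a separate rank-$r_k$ projection containing $F_k(w)y$, is what I view as the crux of the argument, and it requires only $r_k\ge 1$.
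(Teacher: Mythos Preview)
Your argument is correct and follows the same skeleton as the paper's proof: orthogonality \eqref{groningen}--\eqref{groningen2} forces the series to converge to some $G$; the maximum selection principle, combined with the reproducing-kernel estimate, then shows the remainder $\mathcal B_{u+1}F_{u+1}$ tends to $0$ pointwise, whence $G=F$.  Two comments are in order.

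\smallskip
\textbf{A minor imprecision.} The displayed equality
\[
\|\mathfrak B_kM_k\|^{2}\;=\;\max_{w,P}\,(1-\|w\|^{2})\,\mathrm{Tr}\,[\mathcal B_k(w)PF_k(w),\mathcal B_k(w)PF_k(w)]
\]
is not obviously an equality when $N>1$ (multiplication by a Blaschke product is only a partial isometry, cf.\ the infimum in Proposition~\ref{charonne}).  What your argument actually uses, and what is true, is the inequality $\ge$: by the choice of $(w_k,P_k)$ the maximum equals $(1-\|w_k\|^{2})\|\mathcal B_k(w_k)P_kF_k(w_k)\|_F^{2}$, which by \eqref{roosendaal} is $(1-\|w_k\|^{2})\|\mathfrak B_k(w_k)M_k\|_F^{2}$, and this is $\le\|\mathfrak B_kM_k\|^{2}$ by \eqref{ineq11} applied entrywise.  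That is exactly the content of the paper's Step~2, so nothing is lost.

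\smallskip
\textbf{Comparison with the paper.} The paper argues by contradiction: it assumes $G\ne F$, fixes a point $w$ with $F(w)\ne G(w)$, and then in Step~3 compares $\sqrt{1-\|w\|^{2}}\|\mathcal B_{u+1}(w)F_{u+1}(w)\|$ (which stays bounded below) with the maximizing value at $(w_{u+1},P_{u+1})$ (which tends to $0$).  You instead argue directly that $\mathcal B_k(w)F_k(w)\to 0$ for every $w$ and couple this with norm convergence $\mathcal B_{u+1}F_{u+1}\to R$.  The one substantive improvement in your write-up is the explicit handling of the rank constraint: the paper's Step~3 passes silently from $\|\mathcal B_{u+1}(w)F_{u+1}(w)\|$ to a quantity bounded by the maximum over rank-$r_{u+1}$ projections, whereas your column-by-column device (choose, for each direction $y\in\mathbb C^m$, a rank-$r_k$ projection whose range contains $F_k(w)y$) makes that passage transparent and shows that only $r_k\ge 1$ is needed.
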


\begin{proof}
We follow the arguments of \cite{QWa1} and \cite{acqs}.
We set
\begin{equation}
\label{Delft}
R_u(z)=F(z)-\sum_{k=0}^u \mathfrak{B}_k(z)M_k=\mathcal B_{w_{u+1}}(z)F_{u+1}(z)
\end{equation}
(where $F_{u+1}$ is not uniquely defined when $N>1$) and
%For a given choice of $F_{u+1}$ we set $M_{u+1}=F_{u+1}(w_{u+1})$ and
\[
S_u(z)=\sum_{k=u+1}^{\infty}\mathfrak{B}_k(z)M_k.
\]
In view of \eqref{groningen}-\eqref{groningen2} the sum $\sum_{k=0}^\infty \mathfrak{B}_k(z)M_k$ converges
in the Drury-Arveson space.
Let $G$ be its limit, and assume that $G\not= F$. Thus there exists $w\in\mathbb B_N$ such that $G(w)\not=F(w)$.
We now proceed in a number of steps to obtain a contradiction.\\

STEP 1: {\sl There exists $u_0\in\mathbb N$ such that for $u\ge u_0$
\begin{equation}
\label{den_Haag}
\sqrt{1-\|w\|^2}\cdot\|R_u(w)\|>\sup_{\substack{c\in\mathbb C^n,\,\|c\|=1\\
d\in\mathbb C^m,\,\|d\|=1}}\frac{|\langle (F-G)d\,,\,c e_w\rangle_{(\mathbf H({\mathbb B_N))^n}}|}{2}.
\end{equation}
}
Indeed, $S_u$ tends to $0$ in norm in $\left(\mathbf H(\mathbb B_N)\right)^{n\times m}$.
Since in a reproducing kernel Hilbert space convergence in norm implies pointwise convergence,
we have $\lim_{u\rightarrow\infty} S_u(w)=0_{n\times m}$ in the norm of $\mathbb C^{n\times m}$, and there exists $u_0\in\mathbb N$ such that
\[
u\ge u_0\,\,\Longrightarrow\,\, \|S_u(w)\|<\frac{\|F(w)-G(w)\|}{2}.
\]
Thus
\[
\|R_u(w)\|+\frac{\|F(w)-G(w)\|}{2}>\|R_u(w)\|+\|S_u(w)\|\ge \|F(w)-G(w)\|,
\]
and so
\[
\|R_u(w)\|>\frac{\|F(w)-G(w)\|}{2},
\]
which can be rewritten as \eqref{den_Haag}.\\

STEP 2: {\sl It holds that
\begin{equation}
\label{amsterdam_zuid}
\lim_{k\rightarrow\infty} (1-\|w_k\|^2)\|\mathfrak B_k(w_k)M_k\|^2=0
\end{equation}
}

Indeed, from the convergence of $\sum_{k=0}^\infty\mathfrak{B}_kM_k$  we have
\[
\lim_{k\rightarrow\infty}\|\mathfrak{B}_kM_k\|_{({\mathbf H({\mathbb B_N)}})^{n\times m}}=0.
\]
Thus, with $c\in\mathbb C^m$ and $d\in\mathbb C^n$, we have:
\[
\begin{split}
|\langle \mathfrak B_k(w_k)M_kc,d\rangle_{({\mathbf H({\mathbb B_N)}})^n}
|&=|\langle \mathfrak B_kM_kc, \frac{d}{1-\langle\cdot, w_k\rangle}\rangle_{({\mathbf H}(\mathbb B_N))^n}|\\
&\le \|\mathfrak B_kM_kc\|_{({\mathbf H}(\mathbb B_N))^n}\cdot\frac{\|d\|}{\sqrt{1-\|w_k\|^2}}\\
&\le \|\mathfrak B_kM_k\|_{({\mathbf H}(\mathbb B_N))^{n\times m}}\cdot\|c\|\cdot\frac{\|d\|}{\sqrt{1-\|w_k\|^2}},
\end{split}
\]
where we have used the Cauchy-Schwarz inequality. So, after taking supremum on $c$ and $d$,
\[
\|\sqrt{1-\|w_k\|^2}\mathfrak B_k(w_k)M_k\|\le \|\mathfrak B_kM_k\|_{({\mathbf H({\mathbb B_N)})^{n\times m}}}\,\,\longrightarrow\,\, 0\,\,
\text{\rm as $n\rightarrow\infty$,}
\]
and so \eqref{amsterdam_zuid} holds in view of \eqref{roosendaal}.
\mbox{}\\

STEP 3: {\sl We conclude the proof.}\\

Let $u\ge u_0$, where $u_0$ is as in Step 1.
Since  $R_u(z)=\mathcal B_{w_{u+1}}(z)F_{u+1}(z)$ and since $w$ is such that $F(w)\not=G(w)$ we have

%Since $B_{w_{m+1},P_{m+1}}$ is a contraction we can write using \eqref{Delft}
%\[
%|R_m(w)|<\left({\rm Tr}\, F_{m+1}(w)^*F_{m+1}(w)\right)^{\frac{1}{2}},
%\]
%and so
\begin{equation}
\label{beilen}
\sqrt{1-\|w\|^2}\cdot \|\mathcal B_{w_{u+1}}(w)F_{u+1}(w)\|>
\sup_{\substack{c\in\mathbb C^n,\,\|c\|=1\\
d\in\mathbb C^m,\,\|d\|=1}}\frac{|\langle (F-G)d\,,\,c e_w\rangle_{(\mathbf H({\mathbb B_N))^n}}|}{2}.
s\end{equation}
By definition of $w_{u+1}$ we have
\[
\sqrt{1-\|w_{u+1}\|^2}\cdot \|\mathcal B_{w_{u+1}}(w_{u+1})F_{u+1}(w_{u+1})\|<
\sup_{\substack{c\in\mathbb C^n,\,\|c\|=1\\
d\in\mathbb C^m,\,\|d\|=1}}\frac{|\langle (F-G)d\,,\,c e_w\rangle_{(\mathbf H({\mathbb B_N))^n}}|}{2},
\]
and using \eqref{roosendaal} we contradict \eqref{amsterdam_zuid}.

%\[
%\|F-S_m\|^2=\sum_{n=m+1}^\infty\|\mathfrak{B}_n(z)M_n\|_{\mathbf H_2(\mathbb B_N)}.
%\]
\end{proof}

\section{Infinite Blaschke products}
\setcounter{equation}{0}
In the previous sections appeared the counterpart of finite Blaschke products in the setting of the ball. We now consider
the case of infinite products.\smallskip

Let $a\in\mathbb B_N$, and let $b_a(z)$ be a $\mathbb C^{1\times N}$-valued Blaschke factor. We use the formula
\begin{equation}
b_a(z)=\frac{a-\dfrac{za^*}{aa^*}a-\sqrt{1-aa^*}\left(z-\dfrac{za^*}{aa^*}a\right)}{1-za^*},
\end{equation}
from \cite[(2), p. 25]{rudin-ball} rather than the formula in \eqref{blasBN}. See \cite[Lemma 4.2,p. 13]{akap1} for the equality between the two
expressions.\smallskip

We first prove a technical lemma useful in the proof of the convergence of an infinite Blaschke product.

\begin{lemma}
Let $\alpha=\frac{-a}{\sqrt{aa^*}}\in\partial\mathbb B_N$. Then,
\begin{equation}
\label{formula111}
\begin{split}
b_a(z)-b_a(\alpha)&=\frac{(z-\alpha)\left(a^*a\left(\dfrac{1-\sqrt{1-aa^*}}{aa^*}\right)-I_N\right)+z(\alpha a^*)-\alpha(za^*)
}{(1-za^*)(1+\sqrt{aa^*})}\cdot\sqrt{1-{aa^*}}\\
\end{split}
\end{equation}
and
\begin{equation}
\label{rehovot}
\|b_a(z)-b_a(\alpha)\|\le \frac{4\sqrt{1-aa^*}}{1-\|z\|}.
\end{equation}
\end{lemma}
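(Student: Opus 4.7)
The plan is to split the lemma into two tasks: first verify the identity \eqref{formula111} by direct computation from the formula for $b_a$ recalled just above the lemma, and then derive \eqref{rehovot} from it by estimating the numerator and denominator of the resulting expression.

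For \eqref{formula111}, write $s=\sqrt{aa^*}$ for brevity. Using the displayed formula for $b_a$, I first evaluate $b_a(\alpha)$ with $\alpha=-a/s$. Since $\alpha a^*=-s$, one checks that $\alpha-\frac{\alpha a^*}{aa^*}a=\alpha+\frac{a}{s}=0$, so the second term in the numerator of $b_a(\alpha)$ vanishes, and the first term collapses to $a-\alpha=a(1+1/s)$. Dividing by $1-\alpha a^*=1+s$ gives $b_a(\alpha)=a/s=-\alpha$. I would then form the difference $b_a(z)-b_a(\alpha)$ over the common denominator $s(1-za^*)$, expand, and regroup the resulting terms by the two natural factors $(1-s)$ and $\sqrt{1-s^2}$ that appear when combining $a-\alpha$ pieces with the $\sqrt{1-aa^*}$ piece of $b_a(z)$. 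Factoring out $\sqrt{1-s^2}/(1+s)$ from $(1-s)=\frac{(1-s^2)}{1+s}$ and from the $\sqrt{1-s^2}$ contribution, the remaining bracket can be rearranged—using $(z+a/s)a^*a=(za^*+s)a$ and $\alpha(za^*)=-\frac{za^*}{s}a$—into the precise form $(z-\alpha)\bigl(\frac{a^*a(1-\sqrt{1-aa^*})}{aa^*}-I_N\bigr)+z(\alpha a^*)-\alpha(za^*)$ that appears on the right-hand side of \eqref{formula111}. This bookkeeping is the main obstacle of the proof: it is entirely elementary but requires keeping careful track of which factors live in $\mathbb{C}$, which are row vectors, and which are $N\times N$ matrices.

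For \eqref{rehovot} I take norms directly in \eqref{formula111}. The denominator satisfies $|1-za^*|\ge 1-\|z\|\,\|a\|\ge 1-\|z\|$ and $1+\sqrt{aa^*}\ge 1$, which handles the denominator. For the numerator bracket, I estimate the three summands separately. The matrix $M:=\frac{a^*a(1-\sqrt{1-aa^*})}{aa^*}-I_N$ has the form $cP-I_N$ where $P=a^*a/aa^*$ is a rank-one orthogonal projection and $c=1-\sqrt{1-aa^*}\in[0,1]$; its eigenvalues are $c-1\in[-1,0]$ and $-1$, so $\|M\|=1$. Combined with $\|z-\alpha\|\le\|z\|+\|\alpha\|\le 2$, the first term contributes at most $2$. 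The second term $z(\alpha a^*)$ is a scalar times a vector with $|\alpha a^*|=\sqrt{aa^*}\le 1$ and $\|z\|\le 1$, hence of norm at most $1$; similarly $\|\alpha(za^*)\|\le\|\alpha\|\,|za^*|\le 1$. Summing these three bounds gives $4$ for the bracket, and \eqref{rehovot} follows at once.

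In short, once \eqref{formula111} is in hand, the bound \eqref{rehovot} is essentially immediate from three one-line norm estimates; the real work is the algebraic identity, which I would handle by clearing denominators and matching the two expressions term by term.
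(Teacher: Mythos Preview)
Your approach is correct and essentially the same as the paper's: both proofs obtain \eqref{formula111} by clearing denominators and regrouping the resulting terms, the only cosmetic difference being that you first evaluate $b_a(\alpha)=-\alpha$ explicitly (which shortens the bookkeeping), whereas the paper keeps $b_a(\alpha)$ in unreduced form and cancels terms after expanding the $16$-term numerator over the common denominator $(1-za^*)(1-\alpha a^*)$. Your derivation of \eqref{rehovot} via $\|M\|=1$, $\|z-\alpha\|\le 2$, and $|\alpha a^*|,|za^*|\le 1$ is in fact more explicit than the paper's, which records the identity and leaves the estimate to the reader (note, incidentally, that your eigenvalue computation giving $\|M\|=1$ for $N\ge 2$ is correct; the remark following the lemma in the paper, asserting $\|M\|=\sqrt{1-aa^*}$, overlooks the $(N-1)$-fold eigenvalue $-1$).
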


\begin{proof}
We write $b_a(z)-b_a(\alpha)=\frac{\Delta}{(1-za^*)(1-\alpha a^*)}$, where the numerator
\[
\begin{split}
\Delta &=\left(a-\dfrac{za^*}{aa^*}a-\sqrt{1-aa^*}\left(z-\dfrac{za^*}{aa^*}a\right)\right)(1-\alpha a^*)-\\
&\hspace{5mm}-
\left(a-\dfrac{\alpha a^*}{aa^*}a-\sqrt{1-aa^*}\left(\alpha-\dfrac{\alpha a^*}{aa^*}a\right)\right)(1-za^*)
\end{split}
\]
has $16$ terms. Out of there, $a$ and $-a$ cancel each other, and
\[
\frac{za^*}{aa^*}a(\alpha a^*)=\frac{\alpha a^*}{aa^*}a(za^*)
\]
and
\[
\sqrt{1-aa^*}\frac{za^*}{aa^*}a(\alpha a^*)=\sqrt{1-aa^*}\frac{\alpha a^*}{aa^*}a(za^*).
\]
We are thus left with $10$ terms, which can be rewritten as:
\[
\Delta=(z-\alpha)\left(\left(-\frac{a^*a}{aa^*}-\sqrt{1-aa^*}I_N+\sqrt{1-aa^*}\frac{a^*a}{aa^*}
+a^*a\right)+\sqrt{1-aa^*}\left(z(\alpha a^*)-\alpha(za^*)\right)\right).
\]
Note that $z(\alpha a^*)-\alpha(za^*)$ does not vanish when $N>1$.
Therefore
\begin{equation}
\begin{split}
b_a(z)-b_a(\alpha)&=\\
&\hspace{-2cm}=\frac{(z-\alpha)\left(\left(-\frac{a^*a}{aa^*}-\sqrt{1-aa^*}I_N+\sqrt{1-aa^*}\frac{a^*a}{aa^*}
+a^*a\right)+\sqrt{1-aa^*}\left(z(\alpha a^*)-\alpha(za^*)\right)\right)}{(1-za^*)(1+\sqrt{aa^*})}.
\end{split}
\end{equation}

\end{proof}

\begin{remark}
{\rm
We note that
\[
\|a^*a\left(\dfrac{1-\sqrt{1-aa^*}}{aa^*}\right)-I_N\|=\sqrt{1-aa^*},
\]
as can be seen by computing the eigenvalues of the matrix in the left hand side.}
\end{remark}

We now consider a term of the form \eqref{pascal} and write (where $\alpha=-\frac{a}{\sqrt{aa^*}}$ and $W$ is a unitary matrix to be determined)
\begin{equation}
\label{baUW}
\begin{split}
B_a(z)&=B(z)W\\
&=\left(U\left(\begin{array}{cc}b_a(\alpha)&0_{1\times (n-1)}\\
0_{(n-1)\times N}&I_{n-1}\end{array}\right)+U\left(\begin{array}{cc}(b_a(z)-b_a(\alpha)&0_{1\times (n-1)}\\
0_{(n-1)\times N}&I_{n-1}\end{array}\right)\right)W,
\end{split}
\end{equation}
where we do not stress the dependence on the matrices $U$ and $W$.
Since $b_a(\alpha)$ is a unit vector, the matrix
\[
U\left(\begin{array}{cc}b_a(\alpha)&0_{1\times (n-1)}\\
0_{(n-1)\times N}&I_{n-1}\end{array}\right)
\]
is coisometric, and we can complete the columns of its adjoint to a unitary matrix $W$. Then we have
\begin{equation}
\label{norm123}
U\left(\begin{array}{cc}b_a(\alpha)&0_{1\times (n-1)}\\
0_{(n-1)\times N}&I_{n-1}\end{array}\right)
W=\begin{pmatrix} I&0\end{pmatrix}
\end{equation}
and show that the corresponding infinite product will converge when $\sum_{n=1}^\infty\sqrt{1-a_na_n^*}$ converges.\\

In Theorem \ref{blaschkerectangle} below we imbed $\mathbb C^m$ inside $\ell_2$ via the formula:
\begin{equation}
\label{chapman}
i_m(z_1,\ldots, z_m)=(z_1,\ldots, z_m,0,0,\ldots).
\end{equation}
We also need some notation and introduce the matrices
\[
E_k=\begin{pmatrix}1&0_{1\times k(N-1)}\end{pmatrix}\quad (=1\,\,\text{when}\,\, N=1),
\]
\[
F_k=\begin{pmatrix}I_{1+(k-1)(N-1)}&0_{(1+(k-1)(N-1))\times(N-1)}\end{pmatrix}\in\mathbb C^{(1+(k-1)(N-1))\times(N+(k-1)(N-1))},
\]
and note that $E_1=F_1$ and
\begin{equation}
\label{010116!!!}
E_k=E_1F_2\cdots F_{k}\quad\text{and}\quad E_{k+1}=E_kF_{k+1}.
\end{equation}

We also note that multiplication by $F_k$ on the right imbeds $\mathbb C^{1+(k-1)(N-1)}$ into $\mathbb C^{1+k(N-1)}$. It will be
useful to use the notation
\begin{equation}
F_{m_1}^{m_2}=\stackrel{\curvearrowright}{\prod_{k=m_1+1}^{m_2}}E_{k}.
\label{lalala}
\end{equation}
\begin{theorem}
\label{blaschkerectangle}
The infinite product $b_{w_0}(z)B_{w_1}(z)B_{w_2}(z)\cdots B_{w_{k-1}}(z)\cdots$ where the factors are normalized as in \eqref{norm123}
converges pointwise for $z\in\mathbb B_N$ to a non-identically vanishing $\ell_2$-valued function analytic in $\mathbb B_N$ if
\begin{equation}
\sum_{k=0}^\infty \sqrt{1-a_ka_k^*}<\infty.
\label{cond}
\end{equation}
\end{theorem}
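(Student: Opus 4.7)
The plan is to decompose each factor $B_{w_k}(z)$ as an embedding matrix plus a small correction, telescope the partial products in $\ell_2$, and separately argue non-vanishing via the rank-one identity $BB^*=I-R$. By the normalization \eqref{norm123} together with \eqref{baUW}, for $k\ge 1$ I write
\[
B_{w_k}(z)=\begin{pmatrix}I&0\end{pmatrix}+\Delta_k(z),\qquad \Delta_k(z)=U_k\begin{pmatrix}b_{w_k}(z)-b_{w_k}(\alpha_k)&0\\0&0\end{pmatrix}W_k,
\]
where the leading block is precisely the embedding $F_{k+1}$ of \eqref{lalala}. The unitaries $U_k,W_k$ are isometric, so \eqref{rehovot} gives the key bound $\|\Delta_k(z)\|\le 4\sqrt{1-w_kw_k^*}/(1-\|z\|)$.

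For convergence, set $\mathcal P_M(z)=b_{w_0}(z)B_{w_1}(z)\cdots B_{w_{M-1}}(z)$ and let $\tilde{\mathcal P}_M(z)=i_{1+(M-1)(N-1)}(\mathcal P_M(z))$ denote its embedding into $\ell_2$ via \eqref{chapman}. Multiplying $\mathcal P_{M-1}$ by the constant block $\begin{pmatrix}I&0\end{pmatrix}$ of $B_{w_{M-1}}$ is exactly the zero-padding that realizes this embedding, yielding the telescoping identity
\[
\tilde{\mathcal P}_M(z)-\tilde{\mathcal P}_{M-1}(z)=i_{1+M(N-1)}\bigl(\mathcal P_{M-1}(z)\Delta_{M-1}(z)\bigr).
\]
Each Blaschke factor is a Schur multiplier and hence contractive on $\mathbb B_N$, and so is any product of them, giving $\|\mathcal P_{M-1}(z)\|\le 1$. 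Combined with the bound on $\|\Delta_{M-1}\|$ this yields
\[
\|\tilde{\mathcal P}_M(z)-\tilde{\mathcal P}_{M-1}(z)\|_{\ell_2}\le\frac{4\sqrt{1-w_{M-1}w_{M-1}^*}}{1-\|z\|}.
\]
The hypothesis \eqref{cond} then makes $(\tilde{\mathcal P}_M)$ a Cauchy sequence in $\ell_2$, uniformly on compact subsets of $\mathbb B_N$. This produces a pointwise limit $\tilde{\mathcal P}_\infty(z)$, analytic as a uniform-on-compacta limit of analytic $\ell_2$-valued functions.

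For non-vanishing I would invoke the rank-one identity from the proof of Proposition \ref{charonne}, specialized to $w=z$:
\[
B_{w_k}(z)B_{w_k}(z)^*=I-(1-\|z\|^2)\frac{1-w_kw_k^*}{|1-zw_k^*|^2}\cdot\frac{c_kc_k^*}{c_k^*c_k}.
\]
Iterating $\|\mathcal P_M(z)\|^2=\mathcal P_{M-1}(z)B_{w_{M-1}}(z)B_{w_{M-1}}(z)^*\mathcal P_{M-1}(z)^*$ and bounding the rank-one correction by its operator norm produces
\[
\|\mathcal P_M(z)\|^2\ge \|b_{w_0}(z)\|^2\prod_{k=1}^{M-1}\left(1-(1-\|z\|^2)\frac{1-w_kw_k^*}{|1-zw_k^*|^2}\right).
\]
Since $1-w_kw_k^*\le\sqrt{1-w_kw_k^*}$ once the latter is $\le 1$, the hypothesis \eqref{cond} forces $\sum_k(1-w_kw_k^*)<\infty$, and the $k$-th factor differs from $1$ by at most $\tfrac{2}{1-\|z\|}(1-w_kw_k^*)$; so for every fixed $z\in\mathbb B_N$ the infinite product converges to a strictly positive limit. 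Hence $\tilde{\mathcal P}_\infty(z)\neq 0$ wherever $b_{w_0}(z)\neq 0$, and the limit is not identically zero. The main obstacle is the bookkeeping of the matrix sizes that grow by $N-1$ at each step: it is precisely \eqref{norm123} that aligns the constant piece of $B_{w_k}$ with the coordinate embedding and makes the telescoping identity compatible with the maps $i_m$, $E_k$, $F_k$; once this alignment is settled, the two analytic ingredients (the estimate \eqref{rehovot} and the rank-one identity for $BB^*$) drive both convergence and non-vanishing.
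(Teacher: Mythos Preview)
Your argument is correct and takes a genuinely different, more streamlined route than the paper's. For convergence, the paper writes each factor as $F_k+A_k(z)$ with $\|A_k(z)\|\le 4\sqrt{1-a_ka_k^*}/(1-\|z\|)$ and proves by induction the classical estimate $\|\prod_{k=1}^m(F_k+A_k)-E_m\|\le\prod_{k=1}^m(1+\|A_k\|)-1$, then controls the Cauchy tail via $e^x\le 1+xe^x$; you bypass this machinery by observing that every partial product $\mathcal P_{M-1}(z)$ is contractive (each Blaschke factor satisfies $B_{w_k}B_{w_k}^*\le I$ on $\mathbb B_N$), so the telescoping increment $\mathcal P_{M-1}\Delta_{M-1}$ is directly summable. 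For non-vanishing, the paper uses an additive lower bound $\|Z_m(z)\|\ge 1-\sum_{k=1}^m\|A_k(z)\|$, valid only on the tail where the sum is below $1/2$, and then appeals to the coisometric boundary values of the finite head to rule out identical vanishing; your use of the rank-one identity $B_{w_k}(z)B_{w_k}(z)^*=I-\lambda_k\,c_kc_k^*/c_k^*c_k$ with $\lambda_k=(1-\|z\|^2)(1-w_kw_k^*)/|1-zw_k^*|^2$ gives instead the explicit multiplicative bound $\|\mathcal P_M(z)\|^2\ge\|b_{w_0}(z)\|^2\prod_{k\ge 1}(1-\lambda_k)$, which is arguably more informative. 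One small correction: you conclude nonvanishing ``wherever $b_{w_0}(z)\neq 0$'', but since $1-\lambda_k=\|b_{w_k}(z)\|^2$ you must also exclude the points $z=w_k$ for $k\ge 1$; this only removes a countable set, so ``not identically zero'' is unaffected.
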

\begin{proof}
The idea is to follow the proof for the scalar case appearing in sources such as \cite{choquet,favard} and reproduced in
\cite[pp. 104-105]{CAPB}.
We consider the product
\[
\stackrel{\curvearrowright}{\prod_{k=1}^m}(F_k+A_k(z))
\]
with
\[
A_k(z)=U_k\left(\begin{array}{cc}b_{a_k}(z)-b_{a_k}(\alpha)&0_{1\times (k-1)(N-1)}\\
0_{(k-1)(N-1)\times N}&I_{(k-1)(N-1)}\end{array}\right)W_k\,\,\in\,\,\mathbb C^{(1+(k-1)(N-1))\times(N+(k-1)(N-1))},
\]

and note that, in view of \eqref{rehovot},
\begin{equation}
\label{rehovot2}
\|A_k(z)\|\le\frac{4\sqrt{1-a_ka_k^*}}{1-\|z\|}.
\end{equation}
Following the classical proof we now prove the convergence in a number of steps and use
\cite[pp. 104-105]{CAPB} as a source.\\

Note that, to ease the notation, in Steps 1-3 we do not stress the dependence of $A_k$ on the variable $z$.\\

STEP 1: {\sl It holds that}
\begin{equation}
\label{janvier2016}
\|\stackrel{\curvearrowright}{\prod_{k=1}^m}(F_k+A_k)-E_{m}\|\le \prod_{k=1}^m(1+\|A_k\|)-1,\quad m\in\mathbb N.
\end{equation}
We proceed by induction, the case $m=1$ being trivial since $E_1=F_1$. We have
\begin{align*}
\|\stackrel{\curvearrowright}{\prod_{k=1}^{m+1}}(F_k+A_k)-E_{m+1}\|&=
\|\left(\prod_{k=1}^{m}(F_k+A_k)\right)(F_{m+1}+A_{m+1})-E_{m+1}\|\\
&=\|\left(\stackrel{\curvearrowright}{\prod_{k=1}^{m}}(F_k+A_k)\right)(F_{m+1}+A_{m+1})-E_{m}F_{m+1}\|\\
&\le \|\left(\left(\stackrel{\curvearrowright}{\prod_{k=1}^{m}}(F_k+A_k)\right)-E_m\right)F_{m+1})\|\,+\\
&\hspace{5mm}+
\|A_{m+1}\|\left(\prod_{k=1}^m(1+\|A_k\|)\right)\\ &\le
\left(\left(\prod_{k=1}^n(1+\|A_k\|)\right)-1\right)+\|A_{m+1}\|\left(\prod_{k=1}^m(1+\|A_k\|)\right)\\
&=\left(\prod_{k=1}^{m+1}(1+\|A_k\|)\right)-1,
\end{align*}
where we have used the induction hypothesis to go from the third to the fourth line.\medskip

Replacing $A_k$ by $A_{k+m_1}$  we have for $m_2>m_1$:
\begin{equation}
\label{Sentier}
\|\left(\stackrel{\curvearrowright}{\prod_{k=m_1+1}^{m_2}}(E_k+A_k)\right)-\stackrel{\curvearrowright}{\prod_{k=m_1+1}^{m_2}}E_{m_2}\|\le
\left(\prod_{k=m_1+1}^{m_2}(1+\|A_k\|)\right)-1.
\end{equation}

STEP 2: {\sl Let $Z_m=\stackrel{\curvearrowright}{\prod}_{k=1}^m(F_k+A_k)$. Then,}
\[
\|Z_m\|\le e^{\sum_{k=1}^m\|A_k\|}<\infty
\]

Indeed,
\[
\begin{split}
\|Z_m\|&\le\prod_{k=1}^m\|F_k+A_k\|\\
&\le\prod_{k=1}^m (1+\|A_k\|)\\
&\le \prod_{k=1}^m e^{\|A_k\|} \le e^{\sum_{k=1}^\infty
\|A_k\|}<\infty,
\end{split}
\]
in view of \eqref{cond} and \eqref{rehovot2}.\\

STEP 3: {\sl Let $i_m$ be defined by \eqref{chapman}. Then,
$(i_m(Z_m))_{m\in\mathbb N}$ is a Cauchy sequence in $\ell_2$}.\\

For $m_2>m_1$  and using \eqref{Sentier}, we have
\begin{equation}
\label{eq:b1}
\begin{split}
\|i_{m_2}(Z_{m_2})-i_{m_1}(Z_{m_1})\|_{\ell_2}&=\|Z_{m_2}-i_{m_2}\cdots i_{m_1+1}(Z_{m_1})\|_{\mathbb C^{1\times
(1+(m_2+1)(N-1))}}\\
&=\left(\stackrel{\curvearrowright}{\prod_{k=1}^{m_1}}(F_k+A_k)\right)\cdot\left(\stackrel{\curvearrowright}{\prod_{k=m_1+1}^{m_2}}(F_k+A_k)-
F_{m_1+1}^{m_2}
\right)\\
&\le\left(\prod_{k=1}^{m_1}(1+\|A_k\|)\right)\cdot\|
\stackrel{\curvearrowright}{\prod_{k=m_1+1}^{m_2}}(F_k+A_k)-F^{m_2}_{m_1+1}\|\\
&\le e^K\left\{\left(\prod_{k={m_1}+1}^{m_2}(1+\|A_k\|)\right)-1\right\}\\
&\le e^K\left\{\left(\prod_{k={m_1}+1}^{m_2}e^{\|A_k\|}\right)-1\right\}\\
&\le \left(\sum_{k=m_1+1}^{m_2}\|A_k\|\right)e^{2K},
\end{split}
\end{equation}
with $K=\sum_{k=1}^\infty\|A_k\|$ (which is finite, thanks to \eqref{cond} and \eqref{rehovot2}), and using inequality
\[
e^x\le1+xe^x,\quad x\ge 0,
\]
with $x=\sum_{k=m_1+1}^{m_2}\|A_k\|$.\\

STEP 4: {\sl The $\ell_2$-valued function  $Z(z)=\lim_{m\rightarrow\infty} Z_m(z)$ does not vanish identically in $\mathbb B_N$.}\smallskip

We first assume that $\sum_{k=1}^\infty \|A_k(z)\|<\frac{1}{2}$ and prove by induction that
\begin{equation}
\label{bilbao}
\|Z_m(z)\|\ge 1-\sum_{k=1}^m\|A_k(z)\|.
\end{equation}
The claim $Z\not=0$ will then follow by letting $m\rightarrow\infty$. For $m=1$ the claim is trivial. Assume that \eqref{bilbao} holds for
$m$. We then have:
\[
\begin{split}
\|Z_{m+1}(z)\|&=\|Z_m(z)(F_{m+1}+A_{m+1}(z))\|\\
&\ge\|Z_m(z)F_{m+1}\|-\|Z_m(z) A_{m+1}(z)\|\quad\text{(since $\|Z_m(z)F_{m+1}\|=\|Z_m(z)\|$)}\\
&\ge\|Z_m(z)\|-\|Z_m(z)\|\|A_{m+1}(z)\|\quad\hspace{.5cm}\text{(since $\|Z_m(z)A_{m+1}(z)\|\le\|Z_m(z)\|\|A_{m+1}(z)\|$)}\\
&=\|Z_m(z)\|\cdot(1-\|A_{m+1}(z)\|)\\
&\ge( 1-\sum_{k=1}^m\|A_k(z)\|)(1-\|A_{m+1}(z)\|)\\
&\ge( 1-\sum_{k=1}^{m+1}\|A_k(z)\|).
\end{split}
\]

Let $M\in\mathbb N$ (depending on $z$) be such that  $\sum_{k=M}^\infty \|A_k(z)\|<\frac{1}{2}$. Then the same inequality holds in an open neighborhood $V$ of $z$ in view of \eqref{rehovot2}, and so the same $M$ can be taken for $z\in V$.
Let
\[
Z_{M-1}(z)=\stackrel{\curvearrowright}{\prod}_{u=1}^{M-1}(F_u+A_u(z))\in\mathbb C^{1\times(1+(M-2)(N-1))},
\]
where
\[
\widetilde{Z_M}(z)=\stackrel{\curvearrowright}{\prod}_{u=M}^\infty (F_u+A_u(z)).
\]
We can patch together all the $Z_{M-1}(z)\widetilde{Z_M}(z)$ to a common function defined in $\mathbb B_N$.
Assume that $Z_{M-1}(z)\widetilde{Z_M}(z)\equiv 0$ in one of the neighborhoods $V$. Then the infinite product vanishes identically
in $\mathbb B_N$. Letting $z$ go to the boundary we get a contradiction since $Z_{M-1}(z)\widetilde{Z_M}(z)$ takes coisometric values on $\partial\mathbb B_N$.\\

STEP 5: {\sl Using \eqref{janvier2016} and \eqref{eq:b1}, we obtain the bound:}

\begin{equation}
\label{bound010116!!!}
\|\stackrel{\curvearrowright}{\prod_{k=1}^m}(F_k+A_k(z))-Z\|\le
e^{2K}\left(\sum_{k=m+1}^\infty \|A_k(z)\|\right).
\end{equation}
\end{proof}

It is worthwhile to note that the above theorem allows to further extending the results of \cite{akap1} to the case of an infinite number
of points.

\end{document}